\newtheorem{thm}{Theorem} % [section]
\newtheorem*{thm*}{Theorem}
\newtheorem*{prop*}{Proposition}
\newtheorem{cor}[thm]{Corollary}
\newtheorem*{cor*}{Corollary}
\newtheorem*{lem*}{Lemma}
\newtheorem*{claim*}{Claim}
\newtheorem{prop}[thm]{Proposition}
\theoremstyle{remark}
\newtheorem*{rem*}{Remark}
\newtheorem{crit-rem}[thm]{Critical remark}
\newtheorem{remarks}[thm]{Remarks}
\newtheorem{example}[thm]{Example}
\newtheorem*{example*}{Example}
\newtheorem*{defn*}{Definition}
\newtheorem*{con*}{Conjecture}
\def\refp #1.{(\ref{#1})}
\newcommand\carets [1]{\langle #1 \rangle}
\def\sbr #1.{^{[#1]}}
\def\sfl #1.{^{\lfloor #1\rfloor}}
\def\?{{\bf{??}}}
\def\H{\mathcal H}
\def\a{{\frak a}}
\def\P{\mathbb P}
\def\Z{\mathbb Z}
\def\L{\mathcal L}
\def\O{\mathcal O}
\def\Sym{\textrm{Sym}}
\def\rk{\text{rk}}
\def\g{\mathfrak g}
\def\1/2{\frac{1}{2}}
\def\2{{[2]}}
\def\nl{\newline}
\def\<{\langle}
\def\>{\rangle}
\def\2{{[2]}}
\def\scl #1.{^{\lceil#1\rceil}}
\def\spr #1.{^{(#1)}}
\def\sbc #1.{^{\{#1\}}}
\def\subpr#1.{_{(#1)}}
\def\beq{\begin{equation*}}
\def\eeq{\end{equation*}}
\def\g3{{\Gamma\spr 3.}}
\newcommand{\eqspl}[2]{
%\ss{\bf{label:#1}}\nl
\begin{equation}\label{#1}
\begin{split}
%\ul{\bf{label: #1}}\\
#2\end{split}\end{equation}}
\newcommand{\exseq}[3]{
0\to #1\to #2\to #3\to 0
}
\newcommand{\beginalphaenum}{
\begin{enumerate}\renewcommand{\labelenumi}{ }
\item \begin{enumerate}
}
\def\eex{\end{rm}\end{example}}
\begin{document} 
	\title{Polarized interpolation and normal postulation for curves   on Fano hypersurfaces}
	\author %{author}
	{Ziv Ran}
%	%\end{document}
%	%\Large
%	
%	%\thanks{\raggedright{
%	
%	%Partially supported by NSA Grant MDA904-02-1-0094} }
	\thanks{arxiv.org }
	\date {\DTMnow}% \enddate
%	
%	%\affil University of California, Riverside\endaffil
%	
	\address {\nl UC Math Dept. \nl
	Skye Surge Facility, Aberdeen-Inverness Road
	\nl
	Riverside CA 92521 US\nl 
	ziv.ran @  ucr.edu\nl
	\url{https://profiles.ucr.edu/app/home/profile/zivran}
	}
	
	%\email {ziv.ran @ucr.edu}
	 \subjclass[2010]{14n25, 14j45, 14m22}
	\keywords{curves on projective hypersurfaces, normal bundle, postulation, interpolation, 
	Fano hypersurfaces, degeneration methods}
	\begin{abstract}
A general  hypersurface $X$ of degree $\leq n$ in projective space 
contains curves $C$ of any genus $g\geq 0$ and sufficiently large degree
depnding on $g$ whose normal and conormal bundles have good postulation  or natural cohomology in 
the sense that  each twist has either
$H^0=0$ or $H^1=0$. This implies a polarized version of the interpolation property for $C$ on $X$.

		\end{abstract}
	\maketitle
\section*{Introduction}\subsection{Set-up}	 The normal bundle
	$N=N_{C/X}$ is a fundamental 'linear' attribute associated to an embedding or immersion
	$C\to X$ of a curve in a variety, controlling the geometry and motions both local and global
	of $C$ in $X$. For $X=\P^n$ (and less so, a projective hypersurface)
	there have been many studies of normal bundles from various viewpoints including
	balancedness, interpolation and (semi) stability, see e.g. \cite{alyang}, \cite{semistable}.\par
	Our purpose here is to study $N$ and its dual conormal bundle $\check N$ from a 
	cohomological viewpoint closely related to interpolation. A bundle $E$ on a polarized curve
	$(C, \O(1))$ is said to have \emph{good postulation} (or sometimes \emph{natural cohomology})
	(with respect to $\O(1)$)
	if for each twist $E(i)$ one has either $H^0(E(i))=0$ (when $i\leq \chi(E)/\rk(E)$)
	 or $H^1(E(i))=0$ (when $i\geq\chi(E)/\rk(E)$). A curve $C$ on
	a polarized variety $(X, \O(1))$ is said to have \emph{normal maximal rank NMR} (resp.
	\emph{conormal maximal rank CMR}) if its normal bundle $N$ (resp. conormal bundle
	$\check N$) has good postulation with respect to the induced polarization.
	Geometrically, the NMR property is closely related to a
	'polarized' or  'constrained' analogue of interpolation 
	(see \S\ref{interpolation-sec}). \par
	In practice good postulation for $E$ is proven via the equivalent
	assertion  that for some $i_0$ one has
	\[H^0(E(i_0))=0, H^1(E(i_0+1))=0.\]
Note that in case $\chi(E(i_0))=0$, i.e. $\mu(E)=i_0(g-1)$, good postulation is
equivalent to $H^1(E(i_0))=0$. In this case $E(i_0)$ is said to be an Ulrich bundle
(see \cite{beauville-ulrich} \cite{lopez-ulrich}\cite{lopez-etal-ulrich}).
  Thus the good postulation condition may be considered a natural
generalization of (twisted) Ulrich beyond the case $\chi(E(i_0))=0$. 
\subsection{Results} The following statement summarizes our main results 
(Theorems \ref{g=1 thm}, \ref{g thm}, \ref{d=n thm}, \ref{subcanonical}):
\begin{thm*}
On a general hypersurface $X$
of degree $d\leq n$ in $\P^n$ there exist curves of any given genus $g$
and any sufficiently high degree $e$ depending on $g$ that have the NMR or CMR property.
\end{thm*}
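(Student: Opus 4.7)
The plan is to establish the four component theorems (\ref{g=1 thm}, \ref{g thm}, \ref{d=n thm}, \ref{subcanonical}) via a degeneration strategy that inducts on the degree $e$ (and, where needed, on the genus $g$), promoting the polarized good postulation of $N_{C/X}$ or $\check N_{C/X}$ from carefully engineered reducible curves to smooth ones by semicontinuity. The central tool is the restriction sequence
\begin{equation*}
0 \to N_{C/X} \to N_{C/\P^n} \to \O_C(d) \to 0,
\end{equation*}
together with its dual. These reduce the NMR/CMR statements on $X$ to cohomological statements about the (more classical) $N_{C/\P^n}$ twisted against the auxiliary line bundle $\O_C(d)$, and they make clear that the proofs of NMR and CMR should be organized dually.

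First I would secure a base case for each regime. For $g=0$ the natural choice is a union of lines (or low-degree rational curves automatically contained in $X$ because $d\le n$) arranged in a sufficiently general incidence pattern, whose normal bundles in $\P^n$ are explicitly balanced. For $g=1$ the base case is a rational curve plus an elliptic tail, where the elliptic theorem (\ref{g=1 thm}) is proved directly using the translation action on the elliptic component. In both cases the postulation-critical equivalent form $H^0(E(i_0))=0$ and $H^1(E(i_0+1))=0$ is verified componentwise; the contributions of the nodes, where the naive normal bundle is not locally free, are handled by invoking the Blowup Theorem and Node Scroll Theorem to replace the nodal normal sheaf by a locally free model on a blowup with tractable cohomology.

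The inductive step attaches one new rational component through a general point of the current curve in order to increase the degree by a controlled amount, and verifies that the two critical vanishings survive. This is in effect a Horace-style argument: the component's normal bundle in $\P^n$ is known, and the induction hypothesis provides the control needed for the rest of the curve, so the residual analysis is confined to the single new node. The polarization $\O(1)$ enters precisely here, since the attaching point must be in general position relative to $\O(i_0)$ in order that the twist sequence degenerates cleanly.

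The main obstacle, I expect, will be the uniform control of the nodal contributions through the induction: at each step the torsion supported at the new node can, in principle, shift either $H^0$ or $H^1$ and destroy the critical equalities, and the Blowup/Node Scroll machinery must be applied with enough precision that the degrees of the twisting line bundles on the blowup remain in the right ranges. The cases $d=n$ and the subcanonical case are delicate for a different reason: there $\O_C(d)$ is close to or equal to $K_C$, so Serre duality couples NMR and CMR and allows one to exchange one for the other, but one must also rule out spurious sections coming from the trivial factor $H^0(\O_X(d-n)) \ne 0$. These cases I would handle last, once the techniques developed for $d<n$ are available to reduce them to vanishing statements on $X$ itself.
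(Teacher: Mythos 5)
Your proposal captures the right general flavor --- specialize to reducible curves built from rational pieces with balanced normal bundles, verify the two critical vanishings $H^0(E(i_0))=0$ and $H^1(E(i_0+1))=0$ componentwise, and induct --- but the reduction you declare to be your central tool is not the one the paper uses, and it has a real gap. You propose to control $N_{C/X}$ through $0\to N_{C/X}\to N_{C/\P^n}\to \O_C(d)\to 0$. For the $H^1$ half this requires surjectivity of $H^0(N_{C/\P^n}(i))\to H^0(\O_C(d+i))$, which is itself a nontrivial maximal-rank statement; for the $H^0$ half the sequence is essentially useless, because the critical twist for $N_{C/X}$ sits one step above that of $N_{C/\P^n}$ (e.g.\ for $d=n$ one needs $H^0(N_{C/X}(-1))=0$ while $\chi(N_{C/\P^n}(-1))\approx 2e>0$, so the ambient normal bundle still has on the order of $2e$ sections there). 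Thus the desired vanishing does not follow from any vanishing for $N_{C/\P^n}$; it amounts to injectivity of $H^0(N_{C/\P^n}(i_0))\to H^0(\O_C(d+i_0))$, which your outline never addresses and which is at least as hard as the theorem itself. The paper sidesteps this entirely by degenerating the \emph{ambient space}, not just the curve: $\P^n$ is specialized to a fang $P_1\cup_Q P_2$ built from complementary blowups along linear subspaces, and $X$ to $X_1\cup X_2$ with $X_1$ a projective bundle over $\P^m$ (or a blowup of $\P^{n-1}$) and $X_2$ fibred in lower-degree hypersurfaces; the curve is broken so that each piece's normal bundle \emph{in the degenerate ambient variety} is explicitly balanced or perfect (lines in fibres with trivial normal bundle, bridges whose modified normal bundles are perfect, and a horizontal piece controlled by the exact sequence $0\to\check K(M)\to N_{C_1/X_1}\to N_{C'_1/\P^m}\to 0$ coming from the bundle structure of $X_1$). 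That degeneration of the ambient variety is the idea your proposal is missing.

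Two smaller points. The Blowup/Node Scroll machinery you invoke is not needed here: all the degenerate curves are lci in the degenerate ambient variety, so $N_{C_0/X_0}$ is locally free and the nodes are handled by elementary gluing (surjectivity of sections on one component onto the fibre at the node). And the induction the paper actually runs is on the genus --- attaching a genus-$(g-1)$ piece to an elliptic or rational piece across the double locus, with the degree absorbed in a single step via balancedness --- rather than on the degree one rational tail at a time; a degree induction would force you to re-locate the critical twist $i_0$ at every step, which is exactly where Horace-style arguments for normal bundles tend to break down. Finally, the worry about ``spurious sections of $\O_X(d-n)$'' is misplaced, and Serre duality is not how the paper relates NMR and CMR on hypersurfaces: the two properties are proved by parallel direct arguments with two different limiting line bundles $\L_1$, $\L_2$ on the fang.
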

This of course includes the case $d=1$, i.e. Projective space itself.
For example\par - in $\P^n$, for $g\geq 2$, NMR holds as soon as $e\geq n(13g-1)$ 
and CMR holds if $e\geq (5n+2)g$ (Theorem \ref{g thm}).\par
- If $X$ has degree $n$ in $\P^n$, NMR  and CMR hold if
$e\geq 12n^2g$ (Theorem \ref{d=n thm}).
\subsection{Methods} Though logically independent, the NMR property
seems analogous to stability of the normal bundle, with the role of slope replaced
by $\chi$: NMR involves subbundles $\O(i)\to N,
i\leq\chi(N)$, while
stability involves arbitrary subbundles $F\to N$ of given slope $\mu(F)\leq\mu(N)$. 
Thus some of the constructions used
to show stability in \cite{semistable} can be used here with modifications.\par
Thus we will make systematic use of the fang method, which  involves degenerating a hypersurface
$X\subset\P^n$ of degree $d$ (or $\P^n$ itself) to a reducible variety
\[X_0=X_1\cup X_2\subset P_0=P_1\cup_Q P_2\]
(or $\P^n$ to $\P_0$) where $P_1, P_2$ are 'complementary' blowups of $\P^n$
in $\P^m$ resp. $\P^{n-m-1}$ and $Q=\P^m\times\P^{n-m-1}$ is the common
exceptional divisor; and $X_0\subset P_0$ is a suitable hypersurface, e.g.
$X_1$ is typically a suitable blowup of $\P^{n-1}$ or a projective bundle over 
a lower projective space. Then in $X_0$ we study 
suitable reducible curves $C_0=C_1\cup C_2, C_i\subset X_i$.
In the case of $\P^n$ (\S \ref{genus 1}, \S \ref{genus g}) we use an induction on the genus. In the hypersurface
case (\S \ref{d=n},  \S \ref{d<n})  we typically take for $C_2$ a disjoint union of lines with trivial normal bundle
and for $C_1$ a birational model in $\P^{n-1}$.\par

I am grateful to Angelo Lopez for helpful comments.
\section{Normal postulation and polarized interpolation}\label{interpolation-sec}
Let $C_0$ be a lci  curve of degree $e$ on a smooth polarized variety $(X, \O(1))$.
To fix ideas, we assume the normal bundle $N=N_{C_0/X}$ has $H^0(N)\neq 0, H^1(N)=0$.
Let $\H$ be the Hilbert scheme parametrizing deformations of $C_0$ in $X$, 
with tangent space $T_{C_0}\H=H^0(N)$, and let
\[U_d\subset\H\times |\O(d)|\] be the open
subset consisting of pairs $(C, Y)$ that are transverse.
Let \[V_{ed}\subset\Sym^{ed}X\times|\O(d)|\] be the incidence variety consisting of pairs $(A, Y)$
where $A\subset Y$. We have an $|\O(d)|$-morphism
\eqspl{intersection}{\begin{matrix}
U_d&&\stackrel{J_d}{\to}&& V_{ed}\\
&\searrow&&\swarrow&\\
&&|\O(d)|&&
\end{matrix}\\
(C, Y)\mapsto (C\cap Y, Y))
} The induced tangent map $T_{J_d}$ on vertical tangent spaces over $|\O(d)|$ may
be identified as the restriction map
\[H^0(N)\to H^0(N|_{C\cap Y})\simeq H^0(T_Y|_{C\cap Y}).\]
and its kernel may be identified as $H^0(N(-d))$. If $T_{J_d}$ has maximal rank for all $d$,
$C$ or $C_0$ is said to satisfy \emph{polarized interpolation} (with respect  to $\O(1))$.
This means the the family of $de$-tuples $C\cap Y$ is as large as possible among tuples
lying on a degree-$d$ hypersurface and in particular is a general such $de$-tuple when the
numerology (i.e. $h^0(N_{C/X})$) permits. Thus we have
\begin{prop}
Notations as above, assume moreover $H^0(N_{C/X})\neq 0, H^1(N_{C/X})=0$. Then
 if $C_0$ has the NMR property, $C_0$ satisfied
polarized interpolation.
\end{prop}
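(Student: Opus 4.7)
The plan is to reduce the maximal-rank assertion for each tangent map $T_{J_d}$ to a twisted cohomology vanishing on $C$ that is precisely the content of NMR. The key observation is that the restriction map already singled out in the excerpt sits in a standard long exact sequence whose kernel and cokernel are twists of $N$ itself.

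First I would write down the restriction sequence on $C$ associated to the divisor $C\cap Y$. Since $Y$ is transverse to $C$ and has degree $d$, the intersection $C\cap Y$ is an effective divisor of degree $de$ on $C$, linearly equivalent to the restriction of $\O(d)$. Tensoring the structure sequence of this divisor with $N$ gives
$$0\to N(-d)\to N\to N|_{C\cap Y}\to 0.$$
Passing to the long exact sequence in cohomology and invoking the hypothesis $H^1(N)=0$, the map $T_{J_d}$ is identified with the restriction $H^0(N)\to H^0(N|_{C\cap Y})$ with kernel $H^0(N(-d))$ (as already recorded in the excerpt) and cokernel $H^1(N(-d))$.

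Next I would apply NMR directly. By definition, good postulation of $N$ with respect to $\O(1)|_C$ says that for every integer $i$ at least one of $H^0(N(i))$, $H^1(N(i))$ vanishes. Specializing to $i=-d$ for each $d\geq 1$ annihilates either the kernel or the cokernel of $T_{J_d}$, hence $T_{J_d}$ is injective or surjective and thus of maximal rank. Since this holds for all $d$, $C_0$ satisfies polarized interpolation.

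There is no substantial obstacle here; the proposition is essentially a direct translation of the NMR definition once the restriction sequence is in hand. The only point that demands care is the identification of the cokernel as exactly $H^1(N(-d))$, which is where the hypothesis $H^1(N)=0$ enters. The hypothesis $H^0(N)\neq 0$ plays no direct role in the argument — it is there to ensure that the statement is non-vacuous, i.e.\ that the Hilbert scheme $\H$ carries actual deformations of $C_0$ on which interpolation is being tested.
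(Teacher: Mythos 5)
Your argument is correct and is exactly the (implicit) proof the paper intends: the restriction sequence $0\to N(-d)\to N\to N|_{C\cap Y}\to 0$ together with $H^1(N)=0$ identifies $\ker T_{J_d}=H^0(N(-d))$ and $\coker T_{J_d}=H^1(N(-d))$, and NMR kills one of the two for each $d$. Your closing remark is also right: $H^0(N)\neq 0$ only serves to make the interpolation statement non-vacuous, while $H^1(N)=0$ is what makes the cokernel identification (and the smoothness of $\H$ at $C_0$) work.
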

\begin{remarks}
(i) For a bundle $E$ on a curve $C$ of genus 0, good postulation with respect to a line bundle
of degree $e$ means that $\mu_{\max}(E)-\mu_{\min}(E)\leq e$ where $\mu_{\max}, \mu_{\min}$
denote the largest degre\begin{flushleft}
e
\end{flushleft} of a sub (resp. smallest degree of a quotient) line bundle.\par
(ii) On a curve of genus 1 or more generally a subcanonical curve $C$ ($K_C=\O(k), k\in\Z$),
good postulation for $E$ and $\check E$ are equivalent.\par
(iii) For general $g$ the geometric interpretation of of the CMR property is less clear. However, if
$C$ has the CMR and $Y$ is a degree-$d$ hypersurface transverse to $C$ and containing
a canonical divisor $\a$, then either $Y.C\setminus\a$ consists of $de-2g+2$ general points
on $Y$ (if $H^1(\check N(d))=0$), or $C$ has no nontrivial deformations through $C.Y\setminus\a$
(if $H^0(\check N(d))=0$). This follows from 
Serre duality.
\end{remarks}
%There is an analogous geometric interpretation to the CMR property, based on Serre duality.
%Fix a canonical divisor $\a_0$ on $C_0$, let $\H_\a$ denote the space of deformations
%of the pair $(C, \a)$, a $\P^{g-1}$ bundle over $\H$, and let \[U_{d, \a}\subset \H_\a\times|\O(d)|\]
%denote the subset consisting of triples $(C, \a, Y)$ such that the intersection $C.Y$ is transverse
%and contains $\a$. Let
%\[V_{ed-2g+2}\subset \Sym^{ed-2g+2}\times |\O(d)|\]
%denote the incidence variety as above. Again we get an intersection morphism
%\eqspl{intersection}{\begin{matrix}
%U_{d\a}&&\stackrel{J_{d,\a}}{\to}&& V_{ed-2g+2}\\
%&\searrow&&\swarrow&\\
%&&|\O(d)|&&
%\end{matrix}\\
%(C, Y)\mapsto (C\cap Y\setminus\a, Y))
%}
%
%Note that if $C$ is subcanonical, i.e. $K_C=\O_C(k), k\in\Z$, then the NMR and CMR properties are equivalent.
\section{case of genus 1 in $\P^n$}\label{genus 1}
The purpose of this section is to prove that general elliptic curves in $\P^n$ have NMR and CMR:
\begin{thm}\label{g=1 thm}
Let $C$ be a general elliptic curve of degree $e\geq 12n$ in $\P^n, n\geq 3$, with normal bundle $N$.
Then \par (A) if $e\geq 12n$ we have 
(i) $H^1(N(-1))=0$ and $N(-1)$ is generically generated.\par (ii) $H^0(N(-2))=0$.\par
(iii) If $N'\subset N$ is a down modification  at $\leq 2$ points,
then $H^1(N'(-1))=0$ and $N'$ is generically generated.\par
 (B) If $e\geq 4n+2$ we have\par 
(i) $H^0(\check N(1))=0$ .\par (ii) $H^1(\check N(2))=0$ and $\check N(2)$ is generically generated.\par
(iii) If $N'\subset \check N$ is a down modification at $\leq 2$ points,
then $H^1(N'(2))=0$ and $N'$ is generically generated.
\end{thm}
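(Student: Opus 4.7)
The plan is to treat parts (A) and (B) simultaneously by induction on the degree $e$, degenerating the smooth elliptic curve $C$ to a nodal ``stick figure'' and tracking the normal bundle component by component.

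First, observe that on an elliptic curve the canonical bundle is trivial, so Serre duality gives $H^1(N(-1)) \cong H^0(\check N(1))^\vee$ and $H^0(N(-2)) \cong H^1(\check N(2))^\vee$. Hence the purely cohomological vanishings in (A) and (B) are pairwise equivalent, and the additional content lies in the generic-generation clauses together with the allowance in (iii) for down modifications at $\leq 2$ points. This is consistent with the different numerical thresholds $12n$ and $4n+2$, since generic generation of $N(-1)$ (which controls (A)) and of $\check N(2)$ (which controls (B)) are not self-dual conditions.

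For the inductive step, specialize $C$ to a reducible curve $C_0 = C' \cup L$, where $C'$ is a smooth elliptic curve of degree $e-1$ and $L$ is a line meeting $C'$ transversally at one general point $p$. One checks $p_a(C_0)=1$, so $C_0$ lies in the closure of the Hilbert scheme component of smooth elliptic curves of degree $e$. The normal sheaf $N_0 := N_{C_0/\P^n}$ restricts to each component via the short exact sequences
\[
0 \to N_{C'/\P^n} \to N_0|_{C'} \to T_p L \to 0, \qquad 0 \to N_{L/\P^n} \to N_0|_L \to T_p C' \to 0,
\]
and a Mayer--Vietoris sequence on $C_0$ expresses $H^\bullet(N_0(k))$ in terms of the cohomologies on $C'$ and $L$ plus a node term. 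The line side is explicit since $N_{L/\P^n} = \O_L(1)^{n-1}$. On the $C'$ side, the inductive hypothesis, specifically the (iii) clauses for degree $e-1$ covering down modifications at $\leq 2$ points, supplies the cohomological control in the presence of the attachment modification at $p$. Mayer--Vietoris together with upper-semicontinuity in the Hilbert scheme then transfer both the vanishings and the generic-generation properties back to a general smooth $C$ of degree $e$.

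The base case requires exhibiting at least one elliptic curve of the starting degree in $\P^n$ with all six properties, including (iii). A natural approach is a further specialization to a genus-$0$ stratum: join a smooth rational curve $\Gamma$ of degree $e-1$ in $\P^n$ to a line $L$ at two distinct general points of $\Gamma$, producing a nodal curve of arithmetic genus $1$. The normal-bundle theory of rational curves, e.g.\ from \cite{semistable}, then supplies the needed vanishings. Precisely because the connection to $L$ now occurs at two points, a down modification of $N_{\Gamma/\P^n}$ at two points appears unavoidably, which is exactly why (iii) is phrased with ``$\leq 2$ points.''

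The main obstacle will be the generic-generation clauses in (A)(i) and (A)(iii). Cohomological vanishing transfers robustly under specialization via semicontinuity, but generic generation can drop at nodes: the evaluation map $H^0(N'(-1))\otimes \O_C \to N'(-1)$ may fail to be surjective at the generic point once a down modification at an attachment point is imposed. Maintaining surjectivity forces the stronger threshold $e \geq 12n$, since the section count $h^0(N(-1)) = 2e$ must dominate the rank-$(n-1)$ fibre behaviour at up to two controlled points even after the modifications. On the (B) side no analogous surjectivity of $H^0$ onto a full-rank bundle is needed, only generation of $\check N(2)$ whose degree $(n-3)e$ already outgrows the rank $n-1$, which accounts for the milder bound $e \geq 4n+2$.
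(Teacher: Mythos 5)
Your strategy (Serre duality on the elliptic curve to pair up the (A) and (B) vanishings, then an induction on the degree by attaching a line at one node, with a two-pointed rational base case) is genuinely different from the paper's argument, which never inducts on $e$: the paper degenerates $\P^n$ itself to a fang $P_0=P_1\cup_Q P_2$ of blowups along linear spaces and builds $C_0$ from two rational chains joined at two points, choosing the bridge curves so that their restricted normal bundles are \emph{perfect} (evenly split); the splitting type of $N_0$ on every component is then computed explicitly, which is what produces the thresholds and the generic generation. Your Serre-duality observation is correct and worth keeping, but the proposal as written has concrete gaps. First, the base case is only gestured at, and its justification is backwards: attaching a line to $\Gamma$ at two points makes $N_0|_\Gamma$ an \emph{up} modification of $N_{\Gamma/\P^n}$ at those points, not a down modification, so clause (iii) is not what the base case needs (in the paper, (iii) exists to feed the genus induction of Theorem \ref{g thm}). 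Moreover the claimed vanishings are not automatic from ``normal-bundle theory of rational curves'': for instance for $n=3$ a balanced $N_{\Gamma/\P^3}(-2)$ has degree $0$ and hence a $2$-dimensional space of sections, so $H^0(N_0(-2))=0$ requires an actual gluing computation at the two nodes, which is absent.

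Second, the inductive step does not close for the $H^0$-vanishing statements (A)(ii) and (B)(i). Writing $C_0=C'\cup_p L$, the Mayer--Vietoris argument needs $H^0\bigl(N_0(-2)|_{C'}\bigr)=0$, where $N_0(-2)|_{C'}$ is the up modification of $N_{C'}(-2)$ at $p$ in the direction of $T_pL$; its sections are sections of $N_{C'}(-2)(p)$ with polar part along that direction. The inductive hypothesis gives $H^0(N_{C'}(-2))=0$ and information about \emph{down} modifications, but says nothing about $h^0(N_{C'}(-2)(p))$; if the polar evaluation map is surjective, the up modification acquires a section for \emph{every} direction of $L$, and you must then show this section fails to glue with the section of $N_0(-2)|_L\cong\O\oplus(n-2)\O(-1)$ at the node. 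None of this is carried out, and the hypothesis would need to be strengthened to make the induction self-propagating. Finally, the generic-generation clauses and the specific bounds $e\geq 12n$ and $e\geq 4n+2$ are rationalized after the fact rather than derived; in the paper they fall out of the explicit degrees assigned to the perfect bridges. As it stands the proposal is a plausible programme, not a proof.
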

\begin{proof}(A) 
We begin by proving (i) in case $n=2m+1$ is odd (note that if $n=3$ then (i) actually follows from (ii)
and is anyhow redundant). Note that because $\chi(N(-1))=2e$, 
 our assertion is equivalent to $h^0(N(-1))\leq 2e$. As for (ii), when $e=2n-2$ or $e\geq 3n-3$,
 it follows from semistability of $N$ proven in \cite{semistable}, \S2.\par
We use a construction similar to that in \cite{semistable}, \S 5. Consider a fang of type $(n.m)$, i.e.
\eqspl{p0}{P_0=P_1\cup_QP_2}
where $P_1=P_2=B_{\P^m}\P^n, Q=\P^m\times\P^m$. This is a limiting form of $\P^n$.
On $P_0$ we consier the line bundle
\[\L_1=\O_{P_1}(1)\cup(\O_{P_2}(1)(-Q))\]
where $\O_{P_i}(1)$ denotes the pullback of $\O(1)$ from $\P^n$.
This has the property that $\L_1|_Q=\O(1,0)$ and it is a limit of $\O(1)$ on $\P^n$.\par
Now on $P_0$ we consider the following connected nodal lci genus-1 curve 
\eqspl{c0}{C_0=D_1\cup_{p_1,p_2} D_2\subset P_1\cup P_2,}
\[D_i=R_{i1}\cup_{q_{i1}}C_i\cup_{q_{i2}}R_{i2}, i=1,2,\]
where $D_i$ is the birational transform of a connected rational chain  $D'_i=R'_{i1}\cup C'_i\cup R'_{i2}$
consisting of rational curves of respective degrees  $e_i^b, e^d_i, d^b_i\geq n$ 
with $R_{1j}.Q=R_{2j}.Q=p_j, j=1,2$.
Then $C_0$ smooths out to a curve of genus 1 and degree $e=e^d_1+2e^b_1+e^d_2+2e^b_2-2$
in $\P^n$.
\par
Set $N_0=N_{C_0/P_0}(-\L_1)$. 
To begin with, clearly $N_0|_{C_i}=N_{D_i'/\P^n}|_{C'_i}(-1)$ is a rank-1 up modification 
of $N_{C_i/\P^n}(-1)$ at $q_{i1}, q_{i2}$, while $N_0|_{R_{1j}}$ (resp. $N_0|_{R_{2j}}$)
is a corank-$m$ down modification 
of a rank-1 up modification of $N_{R'_{1j}/\P^n}(-1)$ (resp. of $N_{R'_{2j}/\P^n}(-1)$, twisted by $+Q$). 
Therefore by general choices we may assume
$N_0|_{C_i}, N_0|_{R_{ij}}$ are balanced. Now assume $m$ is odd.
Then I claim we may further assume $N_0|_{R_{ij}}$ is perfect. Indeed the rank of the latter bundle is
$2m$ while its degree is
$2e_1^b-m-1$ for $ i=1$, $2e^b_2-m-1+2m=2e^b_2+m-1$ for $ i=2$,
 so it suffices to have $2e_1^b, 2e_2^b\equiv m+1\mod 2m$.
We take $e^b_1=e^b_2=(5m+1)/2$. Because $e^b_1, e^b_2\geq n$, the $R'_{ij}$
are nondegenerate so their normal bundles are balanced. This leads after the modifications to 
\[N_0|_{R_{1j}}=2m\O(2), N_0|_{R_{2j}}=2m\O(3), j=1,2.\]
Therefore if we write
\[2e_i^d=2ms_i+r_i, r_i\in [0,2m)\]
the we get
\eqspl{splitting}{N_0|_{D_1}&=N_{D_1/P_1}(-1)=r_1\O(2, s_1+1, 2)\oplus (2m-r_1)\O(2, s_1, 2),\\  
N_0|_{D_2}&=N_{D_2/P_2}(-1+Q)=r_2\O(3, s_2+1, 3)\oplus (2m-r_2)\O(3, s_1, 3).}
Clearly $H^0(N_{D_i/P_i})\to N_0|_{p_1, p_2}$ is surjective. This easily implies that
\eqspl{}{h^0(N_0)&=h^0(N_0(-\L_1)|_{D_1})+h^0(N_0(-\L_1)|_{D_2})-2m\\
&=(2(e_1^d+2e_1^b)-2)+(2(e_2^d+2e_2^b)-2+4m)-4m=2e
}
and that $N_0$ is generically generated by sections on $C_1$ and $C_2$.
Finally, a similar argument applies to a down modification $N'_0$,  of $N_0$ at  general points of $c_1\in C_1$,
$c_2\in C_2$, showing that $H^1(N'_0)=0$ and $N'_0$ is generically generated on $C_1$ and $C_2$..\par
This completes the proof in case $m$ is odd.
\par
If $m$ is even we can use a fang $P_0=P_1\cup_QP_2$ with $P_1=B_{\P^{m-1}}\P^n, P_2=B_{\P^{m+1}}\P^n$ and  
double locus $Q=\P^{m-1}\times\P^{m+1}$
and construct perfect bridges $R_{1j}, R_{2j}$ of respective degree $e^b_{1}\equiv m/2 \mod 2m$
on $X_1=B_{\P^{m+1}}\P^n$
and $e^b_{2}\equiv m/2+1\mod 2m$ on $X_2=B_{\P^{m-1}}\P^n$.
A similar construction also works for the case $n=2m$ even.\par
(B) Again we first assume $n=2m+1$. We consider a singular model $C_0$ as in \eqref{c0}
which only differs in the choice of bridges $R_{ij}$. Here we rake $R'_{ij}$ a general rational
curve of degree $m+1$, i.e. a rational normal curve in its span $\carets{R'_{ij}}$. We have
\[N_{R'_{ij}/\carets{R'_{ij}}}=m\O(m+3), N_{R'_{ij}/\P^n}=m\O(m+3)\oplus m\O(m+1).\]
Consequently
\[N_{R_{ij}/P_i}=m\O(m+2)\oplus m\O(m+1),\]
\[N_{C_0/P_0}|_{R_{ij}}=(m+1)\O(m+2)\oplus\O(m-1)\O(m+1).\]
As the limit of $\O(2)$ we take
\[\L_2=\O_{P_1}(2)(-Q)\cup\O_{P_2}(-Q)\]
so that $\L_2|_Q=\O(1,1).$ Thus we have
\[\check N_{C_0/P_0}(\L_2)|_{D_i}=\check N_{D'_i/\P^n}(2)=r_i\O(s_i)\oplus(2m-r_i)\O(s_i)\]
where $s_i=[(2m-2)e^d_i]$. From this it easily follow that 
$H^1(\check N_{C_i/P_i}(\L_2)(-p_1-p_2)=0$. Therefore
\[H^1(\check N_{C_0/P_0}(\L_2)=0.\]
This proves (i), and (iii) is proved similarly, while the proof of (ii) is similar and simpler,
using the same limits $\L$ of $\O(1)$ as above. This completes the case $n$ odd.
Then the case $n=2m$ even is handled similarly as above using a fang of type
$B_{\P^m}\P^n\cup B_{P^{m-1}}\P^n$.

/***********,
*********************/
\end{proof}
\section {Higher genus in projective space}\label{genus g}
\begin{thm}\label{g thm}
Let $C$ be a general  curve of genus $g\geq 2$ and degree $e$  in $\P^n, n\geq 3$, with normal bundle $N$.
and conormal bundle $\check N$ Then.\par (A) If $e\geq 13ng-n$, we have
\par If  (i) $H^1(N(-1))=0$ and $N(-1)$ is generically generated.\par (ii) $H^0(N(-2))=0$.\par
(iii) $N'\subset N$ is a down modification locally of corank $\leq n$ at $\leq 2$ points,
then $H^1(N'(-1))=0$ and $N'(-1)$ is generically generated.
\par (B) If $e\geq (5n+2)g, n\geq 5$ we have\par
(i) $H^1(\check N(2)=0)$ and $\check N(2)$ is generically generated.\par
(ii) $H^0(\check N(1))=0$.\par
(iii) If $N'\subset \check N$ is a down modification at $\leq 2$ points, then $H^1(N'(2))=0$
and $N'(2)$ is generically generated. 
\end{thm}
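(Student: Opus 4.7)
The plan is to induct on $g$, taking $g=1$ as base case (Theorem~\ref{g=1 thm}). For the inductive step in part (A), I would specialize a general genus-$g$ curve $C \subset \P^n$ of degree $e$ to a nodal union
\[
C_0 \;=\; C' \;\cup_{\{p_1, p_2\}}\; R,
\]
where $C' \subset \P^n$ is a general curve of genus $g-1$ and degree $e' = e - e_R$, and $R$ is a general rational bridge of degree $e_R$ meeting $C'$ at two general points $p_1, p_2$, so that $C_0$ has arithmetic genus $g$. Here $e_R$ is chosen in the range $[12n,\,13n]$, with its precise value fixed by congruence conditions modulo $n-1$ that guarantee $N_{R/\P^n}(-1)$, after the modifications at the nodes, splits in a balanced and perfect way analogous to the bridges used in Theorem~\ref{g=1 thm}. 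The hypothesis $e \geq 13ng - n$ ensures $e' \geq 13n(g-1) - n$, making the inductive hypothesis available for $C'$.

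Next I would analyze $N_{C_0/\P^n}(-1)$ via the partial-normalization sequence
\[
0 \to N_{C_0/\P^n}(-1) \to \bigl(N_{C_0/\P^n}(-1)|_{C'}\bigr) \oplus \bigl(N_{C_0/\P^n}(-1)|_R\bigr) \to N_{C_0/\P^n}(-1)|_{p_1+p_2} \to 0,
\]
combined with the standard sequence that identifies $N_{C_0/\P^n}|_{C_i}$ as a rank-$1$ up modification of $N_{C_i/\P^n}$ at each node (in the tangent direction of the other branch). Parts (i) and (iii) of the inductive hypothesis on $C'$ deliver vanishing of $H^1$ and generic generation of the modified bundle on $C'$, while the explicit choice of $e_R$ does the same on $R$ by a direct split-bundle computation as in \eqref{splitting}. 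Generic generation on both components forces surjectivity of the node-evaluation map, giving $H^1(N_{C_0/\P^n}(-1)) = 0$ and generic generation, which pass to the general smoothing by semicontinuity. Assertion (ii) that $H^0(N(-2)) = 0$ follows from the same degeneration: both summands in the partial-normalization sequence contribute no sections once $e_R \gtrsim 2n$. Assertion (iii) is obtained by placing the two down-modification points on a single component of $C_0$ and invoking either the inductive version of (iii) on $C'$ or the direct rational computation on $R$.

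Part (B) is proved by the parallel inductive scheme with $\check N(2)$ in place of $N(-1)$. The bridge degree shrinks to $e_R \approx 5n+2$, and one uses the conormal-twist limit line bundle $\L_2 = \O_{P_1}(2)(-Q) \cup \O_{P_2}(-Q)$ from Theorem~\ref{g=1 thm}(B) in the accompanying fang. The hypothesis $n \geq 5$ enters precisely to give enough room in the splitting type of $\check N_{R/\P^n}(2)$ for a balanced and perfect refinement to exist after the two node modifications, so that the same partial-normalization argument goes through.

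The main obstacle I foresee is juggling the two opposing kinds of elementary transformations at each inductive step: the \emph{up} modifications that appear automatically in $N_{C_0/\P^n}|_{C_i}$ at the gluing nodes, and the \emph{down} modifications (of corank up to $n$ in part (A)(iii), unrestricted in corank in part (B)(iii)) permitted by the inductive hypothesis. Propagating $H^1$-vanishing and generic generation through the composition of these transformations, while arranging the bridge degree $e_R$ to simultaneously satisfy the balancedness congruences modulo the rank and keep $e'$ inside the inductive window, is the delicate bookkeeping that accounts for the generous per-genus increments of $13n$ in (A) and $5n+2$ in (B).
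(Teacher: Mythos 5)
Your proposal shares the paper's skeleton --- induction on $g$ with Theorem \ref{g=1 thm} as base, a nodal degeneration at each step, and the partial-normalization sequence plus elementary-modification bookkeeping --- but your inductive step is genuinely different from the paper's. The paper does \emph{not} attach a $2$-secant rational bridge inside $\P^n$: it degenerates the ambient space to a fang $P_0=P_1\cup_Q P_2$ as in \eqref{p0} and takes $C_0=C_{11}\cup C_{12}\cup C_2$, where $C_{11}\subset P_1$ is an \emph{elliptic} curve of degree $\geq 12n$, $C_{12}\subset P_1$ is a genus-$(g-1)$ curve of degree $\geq 13n(g-1)-n$, and $C_2\subset P_2$ is a rational normal bridge of degree $2m+1$ joining them through $Q$. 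There the genus increment comes from the attached elliptic component, so Theorem \ref{g=1 thm}(A)(iii) is re-invoked at every step, and clause (iii) of the inductive hypothesis is forced on the author because passing to birational transforms in the blowup $P_1$ turns the normal bundles into down modifications at the points where the components meet $Q$. Your version stays in $\P^n$, gets the genus increment from a single $2$-secant rational curve, and needs (iii) only to propagate (iii) itself; this is more elementary, and if anything the interval $[12n,13n]$ is a choice made to match the stated hypothesis rather than a necessity, since a bridge of degree on the order of $n$ already makes $N_{C_0}(-1)|_R(-p_1-p_2)$ nonspecial (its slope is $(2e_R-2n+2)/(n-1)$). One step you should tighten: for (A)(ii) you cannot argue that ``both summands'' of $\bigl(N_{C_0}(-2)|_{C'}\bigr)\oplus\bigl(N_{C_0}(-2)|_R\bigr)$ contribute no sections, because $N_{C_0}(-2)|_{C'}$ is an \emph{up} modification of $N_{C'}(-2)$ at $p_1,p_2$ and up modifications can gain sections. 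Use instead the asymmetric sequence $0\to N_{C_0}(-2)|_{C'}(-p_1-p_2)\to N_{C_0}(-2)\to N_{C_0}(-2)|_R\to 0$, whose kernel is a subsheaf of $N_{C'}(-2)$ (so $H^0=0$ by induction) and whose quotient has slope $(3-n)e_R/(n-1)\leq 0$; note this still requires extra care at $n=3$, where that slope is exactly $0$. For part (B) your outline, which does pass to the fang and the limit line bundle $\L_2$, is essentially the paper's argument.
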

\begin{proof}(A)
The proof is by induction on $g$ using the case $g=1$ proved above. Again we begin with the case $n=2m+1$
odd. Consider a fang $P_0=P_1\cup_QP_2$ as in \eqref{p0} and a curve $C_0=C_1\cup C_2\subset P_1\cup P_2$,
such that: $C_1=C_{11}\cup C_{12}$ where $C_{11}, C_{12}\subset P_1$ are birational transforms
of general curves of genus 1 (resp. $g-1$) and degree $\geq 12n$ (resp. $\geq 13n(g-1)-n$)
meeting $\P^m$ in $p_1$ resp. $p_2$, and $C_2\subset P_2$ is the birational transform of a 
rational normal curve of degree 
$e_2= 2m+1$ meeting $\P^m$ in $p_1, p_2$. The $C_0$ smooths out to a curve of genus $g$ and degree
$\geq 13ng-n-2$ in $\P^n$.\par
 Pick general points $c_1\in C_{11}, c_2\in C_{12}$
and let $N_0'$ be a general down modification of $N_0=N_{C_0/P_0}$ at $c_1, c_2$.  By induction, we have
$H^1(N'_0|_{C_1}(-\L_1))=0$. By \cite{elliptic}, Lemma 31, , we know $N_0|_{C_2}$ is balanced which easily implies 
\[N'_0|_{C_2}=2m\O(2m+1),\]
hence $H^1(N'_0|_{C_2}(-p_1-p_2)(-\L_1))=0$.
This implies $H^1(N'_0)=0$. The rest follows easily. The case $n$ even is handled as in Theorem
\ref{g=1 thm}.\par
(B) We can use a similar inductive construction as above and Part (B) of Theorem \ref{g=1 thm}. We have
in case $n=2m+1$,
$\check N_0|_{C_2}(\L_2)=2m\O(2m-4)$
so we can conclude as above.
\end{proof}

A general corank-1 modification of any bundle $E$ with $h^0(E)>0$ at a general point reduces $h^0$
by 1. Therefore we can conclude the following which will be useful in the next section:
\begin{cor} Notations as in Theorem \ref{g thm}, we have:
\par Case (A): a general locally corank-1 down modification of $N(1)$ at at most $ \chi(N(1))=2e+(n-3)(1-g)$
points  has
$H^1=0$.\par
Case (B): a general locally corank-1 down modification of $\check N(2)$ at 
at most $ \chi(\check N(2))=e(n-3)+(n+1)(1-g)$
points has $H^1=0$.

\end{cor}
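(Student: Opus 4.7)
My plan is to apply the observation stated immediately above the corollary iteratively, using the $H^1=0$ conclusions of Theorem \ref{g thm} as the base case. For Case (A), I start from Theorem \ref{g thm}(A)(i): $H^1(N(-1))=0$ together with generic generation of $N(-1)$, so that $h^0(N(-1))=\chi(N(-1))=2e+(n-3)(1-g)$. I would then induct on the number $k$ of down modifications, letting $E_k$ denote the resulting bundle after $k$ steps, with inductive hypotheses $h^1(E_k)=0$ and $h^0(E_k)=\chi(N(-1))-k$. So long as $h^0(E_k)>0$, the cited observation gives $h^0(E_{k+1})=h^0(E_k)-1$ for a general corank-1 down modification at a general point, while $\chi(E_{k+1})=\chi(E_k)-1$, so $h^1(E_{k+1})$ stays zero. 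Iterating up to $k=\chi(N(-1))$ forces $h^0=h^1=0$.

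Case (B) is strictly parallel, starting from Theorem \ref{g thm}(B)(i), which provides $H^1(\check N(2))=0$ and $h^0(\check N(2))=\chi(\check N(2))=e(n-3)+(n+1)(1-g)$, and running the same induction with $\check N(2)$ in place of $N(-1)$.

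The one point that might require verification is that the preceding observation really applies at each step of the induction. I expect this to follow uniformly from the fact that $h^0(E_k)>0$ forces the evaluation map $H^0(E_k)\to (E_k)_p$ to be nonzero at a general point $p\in C$ (any nonzero section of $E_k$ is nonzero on a Zariski-open subset), so that a general hyperplane kernel $K_p\subset (E_k)_p$ defining the down modification cuts out a codimension-one subspace of $H^0(E_k)$. No step-specific property of $E_k$ beyond $h^0>0$ is needed, so the induction proceeds without further input. I do not anticipate any serious obstacle: once Theorem \ref{g thm} is in hand, the corollary is a clean consequence combining the cited observation with Euler characteristic bookkeeping.
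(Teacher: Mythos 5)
Your proposal is correct and matches the paper's (essentially unwritten) proof: the corollary is presented as an immediate consequence of the one-sentence observation preceding it — that a general corank-1 modification at a general point drops $h^0$ by one — iterated with exactly the Euler-characteristic bookkeeping you describe, starting from the $H^1=0$ and generic-generation statements of Theorem \ref{g thm}. The only caveat is that the corollary's $N(1)$ must be read as $N(-1)$, since the displayed value $2e+(n-3)(1-g)$ is $\chi(N(-1))$ and Theorem \ref{g thm}(A) concerns $N(-1)$, which is exactly how you treated it.
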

\section{Curves on anticanonical hypersurfaces}\label{d=n}
Here we construct some curves $C$ of large degree
 with the NMR or CMR property on hypersurfaces $X$ of degree $n$ in $\P^n$.
If $C$ has degree $e$ and genus $g$ and normal bundle $N=N_{C/X}$,
 we have  \[\chi(N)=e+(n-4)(1-g)\] which is $\geq 0$ provided $e\geq (n-4)(g-1)$.
Also \[\chi(N(-1)=-(n-3)e+(n-4)(1-g)<0.\]
So what has to be proven is the following..
\begin{thm}\label{d=n thm}
 Let $X$ be a general hypersurface of degree $n$ in $\P^n, n\geq 4$. Then 
For all $g\geq 0$ and $e\geq 12n^2g$, $X$ contains a curve $C$ of genus $g$ and degree $e$
with normal bundle $N$ such that\par
(A) (i) $H^1(N)=0$;\par
(ii) $H^0(N(-1))=0$.\par
(B)
(i) $H^0(\check N)=0$; \par
(ii) $H^1(\check N(1))=0$.
\end{thm}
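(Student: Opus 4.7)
The plan is to apply the fang method, in analogy with Sections \ref{genus 1} and \ref{genus g}, but now combined with a simultaneous degeneration of the ambient hypersurface $X$. I degenerate the pair $(\P^n,X)$ to a reducible pair $(P_0,X_0)$ where $P_0=P_1\cup_Q P_2$ is a fang (say of type $(n,m)$ with $P_1=B_{\P^m}\P^n$, $P_2=B_{\P^{n-m-1}}\P^n$ and $Q=\P^m\times\P^{n-m-1}$), and $X_0=X_1\cup X_2$ with $X_1\subset P_1$ a blowup of $\P^{n-1}$ (the strict transform of a hyperplane containing the center $\P^m$) and $X_2\subset P_2$ a scroll / projective bundle adapted to the construction, chosen so that $X_0$ is a flat limit of degree-$n$ hypersurfaces in $\P^n$. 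The limit line bundles $\L_1,\L_2$ of $\O_X(1),\O_X(2)$ are as in Section \ref{genus 1}.

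Next I construct a reducible nodal curve $C_0=C_1\cup_\Sigma C_2\subset X_0$ where $C_1\subset X_1$ is the strict transform of a general curve $C_1'$ of genus $g$ and degree $e_1$ in $\P^{n-1}$ satisfying the hypotheses of Theorem \ref{g thm}, and $C_2\subset X_2$ is a disjoint union of $k$ lines, each a fiber of the scroll structure so that $N_{L/X_2}\cong (n-3)\O_L$ is trivial (cf.\ \cite{elliptic}, Lemma 31). Each line meets $Q$, and hence $C_1$, in two points, accounting for the $2k$ nodes of $C_0$. Choose $e_1$ and $k$ so that the arithmetic genus of $C_0$ is $g$ and the total degree is $e=e_1+k$; the requirement that the $2k$-point down-modifications on $C_1$ lie within the range of the Corollary to Theorem \ref{g thm} is what forces the bound $e\geq 12n^2g$.

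The main computation is then the restriction of $N_{C_0/X_0}$ to the two components. On $C_1$, standard normal-bundle exact sequences together with the excess-conormal contribution from $X_1\hookrightarrow P_1$ identify $N_{C_0/X_0}|_{C_1}$ (up to exceptional-divisor twists) as a $2k$-point, locally corank-1 up modification of $N_{C_1'/\P^{n-1}}$; twisting by $\L_1|_{C_1}$ matches exactly the bundles treated in Theorem \ref{g thm}(A) and its Corollary, giving $H^1(N_{C_0/X_0}|_{C_1}(-\Sigma))=0$ and $H^0(N_{C_0/X_0}(-\L_1)|_{C_1})=0$. On each line $L\subset C_2$, the restriction $N_{C_0/X_0}|_L$ is a direct sum of $\O_L$'s with $Q$-corrections, whose cohomology is computed directly on $\P^1$. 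The analogous computation for $\check N$, using Theorem \ref{g thm}(B) and its Corollary, handles part (B).

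I then combine the two sides via the Mayer--Vietoris sequence
\[
0\to N_{C_0/X_0}\to N_{C_0/X_0}|_{C_1}\oplus N_{C_0/X_0}|_{C_2}\to N_{C_0/X_0}|_\Sigma\to 0
\]
to deduce $H^1(N_{C_0/X_0})=0$ and $H^0(N_{C_0/X_0}(-\L_1))=0$ on $X_0$, and dually for the conormal statements in (B). Semicontinuity under the smoothing $(X_0,C_0)\rightsquigarrow (X,C)$ then transfers these vanishings to the general pair. The hard part will be the bookkeeping of the exceptional and $Q$-corrections so that the $2k$-point modifications on $C_1$ actually land in the numerical range where the Corollary to Theorem \ref{g thm} applies, namely $2k\le\chi(N_{C_1'/\P^{n-1}}(1))$ in case (A) and $2k\le\chi(\check N_{C_1'/\P^{n-1}}(2))$ in case (B); this is precisely the quadratic-in-$n$ constraint that yields the stated bound $e\geq 12n^2 g$.
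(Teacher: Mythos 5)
Your overall template --- degenerate $X$, put a disjoint union of lines with trivial normal bundle on one component and a birational model of a curve in $\P^{n-1}$ on the other, and reduce to Theorem \ref{g thm} --- is exactly the strategy the paper announces. But the specifics as you have set them up contain two genuine problems. First, the configuration: in the degeneration actually needed here (the paper degenerates the degree-$n$ hypersurface to a hyperplane union a degree-$(n-1)$ hypersurface, so the double locus $F$ is a degree-$(n-1)$ hypersurface section, not a product $\P^m\times\P^{n-m-1}$), each line of $C_2$ meets the double locus, hence $C_1$, in \emph{one} point, not two. With two nodes per line each of your $k$ lines adds a loop, so $p_a(C_0)=p_a(C_1)+k$, and the genus bookkeeping cannot close for $k$ large. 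Second, and more seriously, your proposed mechanism for the vanishing on $C_1$ cannot work numerically. The number of attachment points is $k\approx (n-1)e_1$ where $e_1\approx e/n$ is the degree of $C'_1\subset\P^{n-1}$, so $k$ is already larger than $\chi(N_{C'_1/\P^{n-1}}(1))=2e_1+(n-4)(1-g)$ for every $n\geq 4$; the Corollary to Theorem \ref{g thm} therefore never applies to a modification at all the nodes, and the bound $e\geq 12n^2g$ does not arise the way you predict (it comes from needing $e_1\gtrsim 13(n-1)g$ so that Theorem \ref{g thm} applies to $C'_1$ in $\P^{n-1}$).

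The paper sidesteps the point-count entirely. For (A)(i) it simply quotes the balancedness results of \cite{elliptic}, \S 4 (balanced plus $\chi\geq 0$ gives $H^1=0$); no modification count is needed since up modifications at the nodes cannot create $H^1$. For (A)(ii), the only \emph{down} modification of $N_{C'_1/\P^{n-1}}$ occurs at the $a\leq n$ points of $C'_1\cap Y$ ($Y$ the blowup center), and the key observation is that with the limit polarization $\L'_1=\O_{X_1}(1-F)\cup\O_{X_2}(1)$ a section of $N_{C_0/X_0}(-\L'_1)$ restricts on $C_1$ to a section of a subsheaf of $N_{C'_1/\P^{n-1}}(-1)$ forced to vanish on a divisor containing $C'_1.\O(1)$; the vanishing then follows from $H^0(N_{C'_1/\P^{n-1}}(-2))=0$ of Theorem \ref{g thm}. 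Part (B)(ii) is handled analogously by exhibiting $\check N_{C'_1/\P^{n-1}}(2)$ as a subsheaf whose $H^1$ surjects appropriately. If you want to salvage your writeup, replace the ``count the $2k$ modification points against $\chi$'' step with this divisor-vanishing argument; as written, that step fails.
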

\begin{proof}  (A)
(i) is a special case of the results of \cite{elliptic}, \S 4, which yield curves $C$ which are balanced,
because balanced clearly implies $H^1(N)=0$.\par
For (ii) we use the singular model used in the proof of Theorem
37 in \cite{elliptic}:
\[C_0=C_1\cup C_2\subset X_0=X_1\cup_F X_2\] 
Thus
$X_2$  is a general hypersurface of degree 
$n-1$ in $\P^n$, $X_1$ is the blowup of $\P^{n-1}$ in a complete intersection
$Y=F_{n-1}\cap F_n$ where $F\subset X_1$ is the birational transform of $F_{n-1}$
and a hyperplane section of $X_2$. Writing
\[e=ne_1-a, a\leq n,\]
let $C_2\subset X_2$ be a disjoint union of $e_1(n-1)-a$ lines
with trivial normal bundle  in $X_2$  $C_1\subset X_1$ corresponds to a curve $C'_1\subset\P^{n-1}$
of genus $g$ and degree $e_1$meeting $Y$ in $a$ general points with general tangents. We use  the following limit of $\O(1)$
\[\L'_1=\O_{X_1}(1-F)\cup\O_{X_2}(1).\]
We have 
\eqspl{O(1)}{\O_{C_1}(1)\simeq C'_1.F_n\setminus C'_1.Y, C_1.F=C'_1.F_{n-1}\setminus C'_1.Y}
Therefore
 we can identify 
 \[\O_{C_1}(\L'_1)=\O_{C'_1}(1)\] while $N_{C_1/X_1}$ is a down modification
of $N_{C'_1/\P^{n-1}}$ at the points of $C'_1\cap Y$. Also,
as each component of $C_2$ is a line with trivial normal bundle, we have  $N_{C_2}(-\L'_1)=N_{C_2}(-1)=(n-2)\O(-1)$. 
Therefore $H^0((N_{C_0/X_0}(-\L'_1)$ corresponds to
sections a subsheaf of $N_{C'_1/\P^{n-1}}(-1)$ vanishing on $F_n.C'_1-Y.C'_{n-1}$ and the latter
divisor certainly contains a divisor of type $C'_1.\O(1)$. Because we may assume
  $H^0(N_{C'_1/\P^{n-1}}(-2))=0$  by Theorem \ref{g thm}, it follows that
$H^0(N_{C_0/X_0}(-1))=0$.\par
(B) We will prove (ii) as (i) is similar and simpler, and in case $g\geq 1$ also follows from A(i). We will use the same
fang and curve as in (A) but with the line bundle $\L_1=\O(1)\cup\O(1-F)$ opposite of the choice for (A). 
Note that with this choice $\L_1$ is trivial on $C_2$ hence
\[H^1(\check N_{C_2/X_2}(\L_1)(-F)=0.\]
Therefore, to prove $H^1(\check N_{C_0/X_0}(\L_1))=0$ it suffices to prove $H^1(\check N_{C_1/X_1}(1))=0$.
By \eqref{O(1)},we can identify $\O_{C_1}(1)=\O_{C'_1}(n-C'_1.Y)$ and this contains $\O_{C'_1}(2)$
as subsheaf and of course $\check N_{C_1/X_1}$ contains $\check N_{C'_1/\P^n}$ as subsheaf. Therefore the vanishing
$H^1(\check N_{C'_1/\P^n}(2))$ as proved in Theorem \ref{g thm} implies $H^1(\check N_{C_1/X_1}(1))=0$
as claimed. 
\end{proof}
\section{Lower-degree Fano hypersurfaces}\label{d<n}
Here we consider curves on hypersurfaces of degree $d<n$ in $\P^n$.
In \cite{elliptic} we constructed some such curves whose normal bundle is balanced, under some
restrictive conditions on the degree and genus of the curve. Here we show under much less restrictive conditions
the existence of curves with NMR or CMR property.
\begin{thm}\label{subcanonical}
Let $X$ be a general hypersurface of degree $d$ in $\P^n$ with $1<d\leq n-1$.
Then\par (A) For all $g\geq 0$ and \[e>(g+1)((d-1)d/2+1), d\geq5\]
or \[e>(g+1)(1+2d), d<5,\] $X$ contains a curve of genus $g$ and degree $e$
with normal bundle $N$ such that\par(i) $H^1(N)=0$;\par
(ii) $H^0(N(-1))=0$.\par
(B) If $d\geq 4$ then, for all $g\geq 0$ and $e\geq \frac{(g+1)d(d-1)}{(d-2)}$,   
$X$ contains a curve of genus $g$ and
degree $e$ and conormal bundle $\check N$ such that\par
(i) $H^1(\check N(1))=0$;\par
(ii) $H^0(\check N)=0$.
\end{thm}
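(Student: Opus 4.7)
I would adapt the fang degeneration from the proof of Theorem \ref{d=n thm} to the lower-degree setting. Specifically, degenerate $X$ of degree $d$ in $\P^n$ to $X_0 = X_1 \cup_F X_2$, where $X_2$ is a general smooth hypersurface of degree $d-1$ in $\P^n$, $F = H \cap X_2$ is the smooth double locus in a hyperplane $H \simeq \P^{n-1}$ (a smooth hypersurface of degree $d-1$ in $\P^{n-1}$), and $X_1$ is the blowup of $H$ along $F$. This arises from the standard factorization $f_d = L \cdot f_{d-1}$ on a general pencil. On $X_0$ construct a reducible curve $C_0 = C_1 \cup C_2$ with $C_1 \subset X_1$ the birational transform of a curve $C'_1 \subset \P^{n-1}$ of genus $g$ and degree $e_1$, and $C_2 \subset X_2$ a disjoint union of lines (or, for very small $d$, low-degree rational curves on $X_2$) meeting $F$ in a specified portion of $C'_1 \cap F$.

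\textbf{Part (A).} As in Theorem \ref{d=n thm}(A)(i), part (i) follows from balancedness results of \cite{elliptic}, \S 4, since a balanced bundle of nonnegative $\chi$ has $H^1 = 0$. For part (ii), I take the same limit line bundle $\L'_1 = \O_{X_1}(1-F) \cup \O_{X_2}(1)$ used in Theorem \ref{d=n thm}, so that $\O_{C_1}(\L'_1) \simeq \O_{C'_1}(1)$ and $N_{C_1/X_1}$ is a down modification of $N_{C'_1/\P^{n-1}}$ at $C'_1 \cap F$. A section of $N_{C_0/X_0}(-\L'_1)$ then restricts on $C_1$ to a section of $N_{C'_1/\P^{n-1}}(-1)$ vanishing on a divisor that contains a divisor of type $C'_1 \cdot \O(1)$, hence comes from $N_{C'_1/\P^{n-1}}(-2)$. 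The vanishing $H^0(N_{C'_1/\P^{n-1}}(-2)) = 0$ is exactly Theorem \ref{g thm}(A)(ii). On $C_2$ the twist by $-\L'_1 = -\O(1)$ drives each summand of $N_{C_2/X_2}$ to negative degree on each line, killing $H^0$.

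\textbf{Part (B) and main obstacle.} For (B) use the opposite limit $\L_1 = \O_{X_1}(1) \cup \O_{X_2}(1-F)$, under which $\L_1$ is trivial on each line of $C_2$ so that $H^1(\check N_{C_2/X_2}(\L_1)(-F)) = 0$ reduces to a line-by-line check. On the $C_1$ side, the identification $\O_{C_1}(\L_1) \supset \O_{C'_1}(2)$ together with $\check N_{C_1/X_1} \supset \check N_{C'_1/\P^{n-1}}$ reduces $H^1(\check N_{C_1/X_1}(\L_1)) = 0$ to $H^1(\check N_{C'_1/\P^{n-1}}(2)) = 0$, which is Theorem \ref{g thm}(B)(i). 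Part (B)(i) is analogous and simpler. The main obstacle is controlling the $C_2$ component: unlike the $d=n$ case, a line $L \subset X_2$ has $\deg N_{L/X_2} = n - d > 0$, and for a generic line the splitting is essentially $(n-d)\O(1) \oplus (d-2)\O$, so one must verify that the specific twists $-\L'_1$ and $\L_1$ kill cohomology on each line while keeping enough freedom to interpolate the required $C'_1 \cap F$ attachment data. This is what forces the case split between $d \geq 5$ and $d < 5$ in part (A), and the restriction $d \geq 4$ in part (B) (for $d \leq 3$ the numerology of $\check N(1)$ fails on lines). Balancing the number of lines in $C_2$ against $e_1$ and the number $\alpha$ of common intersection points on $F$ produces the stated numerical bounds on $e$ relative to $g$.
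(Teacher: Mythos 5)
Your degeneration is not the one the paper uses for $d<n$, and the differences are fatal rather than cosmetic. You keep the $f_d\rightsquigarrow L\cdot f_{d-1}$ picture from Theorem \ref{d=n thm}, so your $C_2$ consists of lines on a general degree-$(d-1)$ hypersurface in $\P^n$; as you yourself note, such a line has $N_{L/X_2}\approx(n-d)\O(1)\oplus(d-2)\O$. But then in part (B), $\check N_{C_2/X_2}(\L_1)(-F)|_L\approx(n-d)\O(-2)\oplus(d-2)\O(-1)$ has $H^1\neq 0$, so the reduction of $H^1(\check N_{C_0/X_0}(\L_1))=0$ to the $C_1$ side simply fails; identifying this as an ``obstacle to be verified'' does not resolve it, and your claim that it merely ``forces the case split'' is not correct (the paper's $d\geq 5$ versus $d<5$ split comes from the choice $e_0=\min(d-1,4)$ for the genus-$0$ base curve, and the $d\geq 4$ restriction in (B) from requiring the interval $[e/d,(d-2)e/d]$ to contain an integer and the relevant $\chi$'s to be nonnegative). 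The paper avoids the problem entirely by taking a fang of type $(n,\,m=d-1)$ in which $X_1=\P(G)$ is a projective bundle over $\P^{d-1}$ and $X_2$ is fibred over $\P^{n-d}$ with fibres degree-$(d-1)$ hypersurfaces in $\P^{d}$; the components of $C_2$ are lines \emph{in the fibres}, which do have trivial normal bundle in $X_2$, so that both the normal and conormal computations on $C_2$ go through.

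There is a second, independent obstruction to your route: you reduce the $C_1$ side to Theorem \ref{g thm} for a curve $C'_1\subset\P^{n-1}$, which requires $e_1\gtrsim 13(n-1)g$ (resp.\ $(5n-3)g$ for the conormal statements), hence $e\approx d\,e_1$ growing linearly in $n$. The bounds in Theorem \ref{subcanonical} depend only on $d$ and $g$, so your argument cannot yield the theorem as stated. The paper gets $n$-independent bounds by projecting $C_1$ to a curve $C'_1$ in the \emph{low-dimensional} space $\P^{d-1}$, using the exact sequence $0\to\check K(M)\to N_{C_1/X_1}\to N_{C'_1/\P^{d-1}}\to 0$; the quotient is handled by Brill--Noether existence ($\rho\geq 0$), and the vertical piece $\check K(M)$ by its own induction on genus starting from balancedness in genus $0$ (\cite{caudatenormal}) and a two-nodal degeneration $D'_1\cup_{p,q}D'_2$ with projective normality input from \cite{ununs}. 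None of this machinery appears in your proposal. Finally, your disposal of A(i) by citing the balancedness results of \cite{elliptic}, \S 4 is also not available here: the paper states explicitly that those results hold only under restrictive conditions on $e$ and $g$, and the whole point of Theorem \ref{subcanonical}(A)(i) is to prove $H^1(N)=0$ in the much larger stated range by the fang argument just described.
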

\begin{proof}
(A)(i) For certain $g,e$, Theorem 41 of \cite{elliptic} yields curves such that $N$ is balanced and moreover
$\chi(N)\geq 0$, which implies $H^1(N)=0$. We will use the same construction to show that in the more 
general case considered here one still has $H^1(N)=0$. Thus consider a fang of type $(n, m=d-1)$
\[X_0=X_1\cup_Z X_2\subset P_0=P_1\cup P_2,\]
Thus $X_1\subset P_1$ is the birational transform of a hypersurface of degree $d$ with multiplicity
$d-1$ along $\P^{n-m-1}$, and is itself a $\P^{n-m-1}$-bundle over $\P^m$ of the form $\P(G)$
where $G$ fits in an exact sequence
\[\exseq{\O(-m)}{\O(1)\oplus(n-m)\O}{G}\]
$X_2$ is the birational transform of a hypersurface of degree $d$ containing $\P^m$, and is fibred over
$\P^{n-m-1}$ with fibre a hypersurface of degree $m=d-1$ in the $\P^{m+1}$ fibre of $P_2$ over 
$\P^{n-m-1}$.\par
As in \cite{elliptic} we consider a lci nodal curve
\[C_0=C_1\cup C_2\]
where $C_1\subset X_1$ is a curve of genus $g$, mapping to a degree $e$ curve in $\P^n$
and to a degree $e_0$ curve  $C'_1\subset \P^m$, and $C_2\subset X_2$ is a disjoint union of $(e-e_0)$
lines in fibres which we know have trivial normal bundle, hence
\eqspl{c2}{H^1(N_{C_2/X_2}(-Z)=0.}
Thus, to conclude $H^1(N_{C_0/X_0})=0$ it will suffice to prove that $H^1(N_{C_1/X_1})=0$.
The map $C_1\to X_1$ lifting $C'_1\to\P^m$ corresponds to an exact sequence
\[\exseq{K}{G|_{C'_1}}{M}\]
where $M$ a line bundle of degree $e$ (i.e. $\O_{\P^n}(1)|_{C_1}))$ such that $M-\O_{\P^m}(1)$ is effective
and we have an exact sequence
\eqspl{vertical-horizontal}{\exseq{\check K(M)}{N_{C_1/X_1}}{N_{C'_1/\P^m}}.}
It will suffice to show that $H^1(N_{C'_1/\P^m})=0=H^1(\check K(M))$.\par
Now it is well know that curves (even with general moduli) with vanishing of $H^1(N_{C'/\P^m})$
exist as soon as the Brill-Noether number $\rho=g-(m+1)(g-e_0+m)\geq 0$. So it suffices to show
that for a general $C'_1, M$ one has $H^1(\check K(M)=0$. Note that $\check K(M)$ has slope independent of $g$
\[\mu(\check K(M))=\frac{(n-d+1)e-de_0}{n-d}.\]
\par Now
vanishing of $H^1(\check K(M))$ can be proven by induction on the genus $g$
as in the proof of Theorem 41 in \cite{elliptic}. For $g=0$ we have by \cite{caudatenormal}, Theorem 31
that $K$ is balanced, hence so is $\check K(M)$. We claim that the latter bundle has slope$\geq 1$
which clearly implies $H^1(\check K(M))=0$.
The slope condition read
\[e\geq \frac {n-d}{n-d+1}+\frac{d}{n-d+1}e_0\]
which clearly holds provided $e\geq 1+de_0/2$. Thus choosing $e_0=\min(m,4)$
we get that $\check K(M)|_{C_1}$ for the general $C_1$ of genus 0 and degree
\[e\geq\max(d(d-1)/2, 2d).\]
We assume for simplicity $m\geq 4$, i./e. $d\geq 5$ as the case $m<4$ is similar and simpler.
For the induction step we work as in \cite{elliptic}, proof of Theorem 41, use a 2-nodal curve of the form
\[D'_0=D'_1\cup_{p,q}D'_2\subset\P^m\]
with $D'_1, D'_2$ of genus 0 resp, $g-1$ and degree $e_{01}\geq m$ resp. $e_{02}\geq mg$.
Now the result of \cite{ununs}, Corollary 3 says in our case that $D'_0$ is projectively normal provided
\[e_{01}\geq 4, e_{02}\geq 2g+2.\]
Since these conditions are automatic in our case, it follows as in 
the above-referenced proof that  $D'_0$ can be lifted to $D_0\subset X_1$ corresponding to 
 a line bundle $M=M_1\cup M_2, M_i=M|_{D'_i}$ of degree 
 \[e_1=e_{11}+e_{12}, e_{11}=\deg(M_1)\geq d(d-1)/2, e_{12}=\deg(M_2)\geq 1+gd(d-1)/2\] 
 with $M(-H)$ effective where $H\sim\O_{D'_0}(1)$,
such that $H^1(\check K|_{D_2}(M_2)=0$ (by induction), and $H^1(\check K_{D_1}(M_1)(-p-q)=0$
(by balancednes plus degree). Then $H^1(\check K_{D_0}(M))=0$. This completes the induction step
and the proof of Theorem \ref{subcanonical}(A)(i).\par
The proof of A(ii) is similar and simpler. We use $\L_1=\O(1)\cup\O(1-Z)$ as the limit of $\O(1)$
so that $N_{C_0/X_0}(-\L_1)|_{C_2}(-Z)$ is a direct sum of $\O(-1)$ on each component hence
has $H^0=0$. Hence it suffices to prove $H^0(N_{C_1/X_1}(-H_{\P^n}))=0$ where $H_{\P^n}$
denotes the $\O(1)$ from $\P^n$, i.e. $M$ in the above notation. But because $M$ contains
$\O_{\P^m}(2)$, the vanishing of $H^0$ follows from Theorem \ref{g thm}.\par
For (B), note that
\[\chi(\check N(1))=e(d-3)+n(1-g).\]
We begin by proving $H^1(\check N(1)=0)$ for $d\geq 4$, first for genus 0.
We use the same singular model $C_0=C_1\cup C_2$ as in Case (A), taking again
$\L_1=\O(1)\cup\O(1-Q)$. As in \eqref{c2},
triviality of the normal bundle of $C_2$ yields
\[H^1(\check N(\L_1)|_{C_2}(-Z))=0\]
so it suffices to prove $H^1(\check N(1)_{C_1})=0$. Dualizing the exact sequence
\eqref{vertical-horizontal}, we get exact
\eqspl{horizontal-vertical}{\exseq{\check N_{C'_1/\P^m}(M)}{\check N_{C_1/X_1}(1)}{K}.}
As we have seen, the kernel and cokernel in this sequence are balanced with 
\[\chi(\check N_{C'_1/\P^m}(M))=e(d-2)-(e_0-1)d\]
\[\chi(K)=de_0-e+n-d.\]
As these are balanced bundles, $\chi\geq 0$ or $\mu\geq -1$ implies $H^1=0$.
So it suffices to take
\[e\geq d(d-1)/(d-2), e_0\in[e/d, (d-2)e/d]\]
(note that the interval has length $\frac{(d-3)e}{d}>1$ hence contains an integer).
This will ensure that $H^1(\check N(1))=0$.\par
Again the proof that $H^0(\check N)=0$ is similar and simpler.
This completes the proof for genus 0.\par
For  $g>0$ the argument proceeds by induction as above. Using \eqref{horizontal-vertical}
we can show as above that $H^1(\check N_{C'_1/\P^m}(M))=0$ (note $m\geq 3$), and it suffices to show
$H^1(K)=0$. Again we can use the 2-node degeneration $D'_0=D'_1\cup_{p,q} D'_2$ as above
and note that $H^1(K_{D'_1}(-p-q))=0$ as $K|_{D'_1}$ has slope $\geq 1$ and $H^1(K|_{D'_2})=0$ by induction
so $H^1(K_{D'_0})=0$. Again the proof that $H^0(\check N)=0$ is similar and simpler.

\end{proof}
%\vfill\eject
	\bibliographystyle{amsplain}
	\bibliography{../mybib}
	\end{document}